\newtheorem{theorem}{Theorem}[section]
\newtheorem{corollary}[theorem]{Corollary}
\newtheorem{lemma}[theorem]{Lemma}
\newtheorem{proposition}[theorem]{Proposition}
\theoremstyle{remark}
\newtheorem{example}[theorem]{Example}
\newtheorem{remark}[theorem]{Remark}
\newcommand{\CH}{\mathcal{H}}
\newcommand{\Z}{{\mathbb Z}}
\newcommand{\Ima}{\mathrm{Im}}
\newcommand{\End}{\mathrm{End}}
\newcommand{\Heap}{\mathrm{ Heap}}
\def\lto{\longmapsto}
\def\lra{\longrightarrow}
\newcounter{rlist} 
  \def\eps{\varepsilon}
\title{The Baer-Kaplansky Theorem for All Abelian Groups and Modules}
\author{Simion Breaz}
\address{Babe\c s-Bolyai University, Faculty of Mathematics and Computer Science, Str. Mihail Kog\u alniceanu 1, 400084, Cluj-Napoca, Romania}
\email{bodo@math.ubbcluj.ro}
\author{Tomasz Brzezi\'nski}
\thanks{This research  of T.\ Brzezi\'nski is partially supported by the National Science Centre, Poland, grant no. 2019/35/B/ST1/01115.}
\address{
Department of Mathematics, Swansea University, 
Swansea University Bay Campus,
Fabian Way,
Swansea,
  Swansea SA1 8EN, U.K.\ \newline \indent
Faculty of Mathematics, University of Bia{\l}ystok, K.\ Cio{\l}kowskiego  1M,
15-245 Bia\-{\l}ys\-tok, Poland}
\email{T.Brzezinski@swansea.ac.uk}
\begin{document}

\begin{abstract}
It is shown that the Baer-Kaplansky theorem can be extended to all abelian groups provided that the rings of endomorphisms of groups are replaced by trusses of endomorphisms of corresponding heaps. That is, every abelian group is determined up to isomorphism by its endomorphism truss and every isomorphism between two endomorphism trusses associated to some abelian groups $G$ and $H$ is induced by an isomorphism between $G$ and $H$ and an element from $H$. This correspondence is then extended to all modules over a ring by considering heaps of modules. It is proved that the truss of endomorphisms of a heap associated to a module $M$ determines $M$ as a module over its endomorphism ring.   
\end{abstract}

\subjclass[2010]{Primary: 20K30; Secondary: 16Y99.
}

\keywords{abelian group, heap, endomorphism truss}
\maketitle

\section{Introduction}

The Baer-Kaplansky Theorem, \cite[Theorem 16.2.5]{Fu15}, states that, for every isomorphism $\Phi:\End(G)\lra \End(H)$ between the endomorphism rings of torsion abelian groups $G$ and $H$ there exists an isomorphism $\varphi:G\lra H$ such that $\Phi:\alpha\lto \varphi\alpha \varphi^{-1}$ (here and throughout this note the composition of mappings is denoted by juxtaposition of symbols). A similar result was known for endomorphisms of vector spaces. The existence of an analogous statement for other kinds of modules (or abelian groups) was investigated by many authors, e.g.\ in \cite{Iv}, \cite{May}, \cite{Wi}, and \cite{Wo}. There are situations in which there exist isomorphisms between endomorphism rings that are not induced in the above mentioned way. Such an example is described  in \cite[p.\ 486]{May}, and some other detailed studies are presented in \cite{Iv} and \cite{Wo}. On the other hand, there are examples which show that, in general, in order to obtain a Baer-Kaplansky Theorem, one needs to restrict to some reasonable classes of objects, see e.g.\ \cite[Example 9.2.3]{Fu15}. From a different perspective, it was proven in \cite{Bre15} that in the case of modules over principal ideal domains every module is determined up to isomorphism by the endomorphism ring of a convenient module. In particular, two abelian groups $G$ and $H$ are isomorphic if and only if the rings $\End(\Z\oplus G)$ and $\End(\Z\oplus H)$ are isomorphic. However, if $G$ contains a direct summand isomorphic to $\Z$ then not every ring isomorphism between $\End(\Z\oplus G)$ and $\End(\Z\oplus H)$ is induced in a natural way by an isomorphism between $G$ and $H$.    

A natural question that emerges from this discussion is this: can one associate some endomorphism structures to abelian groups  such that all isomorphisms between two such endomorphism structures are induced in a natural way by isomorphisms between the corresponding abelian groups? 

In this note we present an answer to this question that makes use of an observation made by Pr\"ufer  \cite{Pr} that the structure of an abelian group $(G,+)$ can be fully encoded by a set $G$ with a ternary operation modelled on $[a,b,c]=a-b+c$ 
(this observation was extended to general groups by Baer \cite{Ba}). A set $G$ together with such a ternary operation is called a \textit{heap}, and a heap is abelian if $[a,b,c]=[c,b,a]$, for all $a,b,c\in G$; the precise definition is recalled in the following section. By fixing the middle entry in the ternary operation, a (abelian) group can be associated to a (abelian) heap uniquely up to isomorphism.  In this way we obtain a bijective correspondence between the isomorphism classes of (abelian) groups and the isomorphism classes of (abelian) heaps. Details about these correspondences are given e.g.\ in \cite{Brz20}, \cite{Ce}, and \cite{La}. The set $E(G)$ of all endomorphisms of an abelian heap is a heap with the ternary operation defined pointwise. Being an endomorphism object it has a natural monoid structure given by composition. The composition distributes over the ternary heap operation thus making $E(G)$ a (unital) \textit{truss}, a notion proposed  in \cite{Brz19} as an algebraic structure encapsulating both rings and \textit{(skew) braces} introduced in \cite{Ru}, \cite{CedJes:bra}, \cite{GuaVen:ske} to capture the nature of solutions to the set theoretic Yang-Baxter equation \cite{Dr}. 

The endomorphism truss $E(G)$ of an abelian group $G$ carries more information than the endomorphism ring $\End(G)$ (just as the holomorph of a group carries more information than the group of its automorphisms). For example $E(G)$ includes all constant mappings. In fact  $E(G)$ can be realised as a semi-direct product $G\rtimes \End(G)$, see \cite[Proposition~3.44]{Brz20}. It seems quite reasonable to expect that  $E(G)$ provides a right environment for the Baer-Kaplansky theorem for all abelian groups. In this note we show that this is indeed the case, and the first main result, Theorem~\ref{main-BK}, establishes the Baer-Kaplansky type correspondence between the isomorphisms of abelian groups $(G,+)$ and $(H,+)$, and isomorphisms of their endomorphism trusses $E(G)$ and $E(H)$. In the second main result, Theorem~\ref{main-BK-mod}  we extend the Baer-Kaplansky correspondence to all modules $M$ over a ring $R$. To achieve this we associate to each module over a ring a family of modules, which we term the heap of modules and we define homomorphisms of heaps of modules as maps that respect all these module structures in a specific way. Endomorphisms of heaps of modules form trusses, and these trusses for two modules (over possibly different rings) are isomorphic  if and only if the modules are equivalent as modules over their own endomorphism rings. The equivalence of modules over different rings is defined in a natural way as an isomorphism in the category consisting of pairs of rings and (left) modules over these rings $(R,M)$, with morphisms from $(R,M)$ to $(S,N)$ given as pairs consisting of a ring homomorphism $\varrho: R\lra S$ and a homomorphism of modules $\mu:M\lra N$ over $R$, with $N$ viewed as the $R$-module via $\varrho$.

\section{Morphisms between endomorphism trusses}

A \textsl{heap} is a set $G$ together with a ternary operation $[-,-,-]$ on $G$ that is associative and satisfies the Mal'cev identities, that is,  for all $a,b,c,d,e\in G$, 
$$
[[a,b,c],d,e]=[a,b,[c,d,e]] \quad \mbox{and} \quad 
[a,a,b]=b=[b,a,a].
$$
The heap $(G,[-,-,-])$ is \textit{abelian} if $[a,b,c]=[c,b,a]$ for all $a,b,c\in G$.

A heap structure on a set $G$ induces a group structure on $G$, and this group structure is unique up to isomorphism. More precisely, if $(G,[-,-,-])$ is a heap and $b\in G$ then the binary operation $+_b$ defined by 
$$
a+_b c=[a,b,c], \quad \textrm{ for all }a,b,c\in G,
$$ 
equips $G$ with a group structure. The group $(G,+_b)$ is referred to as a \textit{retract} of $(G,[-,-,-])$  at $b$. The operation $+_b$ is commutative provided $G$ is an abelian heap. Moreover, if $b'\in G$ then the groups $(G,+_b)$ and $(G,+_{b'})$ are isomorphic by the map $a\lto [a,b,b']$.

Conversely, if $(G,+)$ is a group then $G$ together with the ternary operation
$$
[a,b,c]=a-b+c, \quad \textrm{ for all }a,b,c\in G,
$$
is a heap, which is abelian if and only if $(G,+)$ is abelian. In this case, for every $b\in G$, the induced group structure $(G,+_b)$ is isomorphic to $(G,+)$ since $+=+_0$. Unlike the assignment of a retract to a heap, this assignment of the heap to a group is functorial, that is, it defines a functor from the category of (abelian) groups to that of (abelian) heaps.

If $(G,[-,-,-])$ and $(H,[-,-,-])$ are heaps, a mapping $\alpha:G\lra H$ is a \textit{heap morphism} if $\alpha([a,b,c])=[\alpha(a),\alpha(b),\alpha(c)]$, for all $a,b,c\in G$. The set of all heap morphisms from $G$ to $H$ is denoted by $\Heap(G,H)$. If $G$ and $H$ are abelian heaps, then $\Heap(G,H)$ is a heap with the pointwise defined heap operation $[\alpha,\beta,\gamma](a)=[\alpha(a),\beta(a),\gamma(a)]$. We note that $\alpha:G\lra H$ is a heap morphism if and only if $\alpha :(G,+_b)\lra (H,+_{\alpha(b)})$ is a group morphism for every $b\in G$. If $G$ and $H$ are groups, a map $\alpha:G\lra H$ is a {heap morphism} if it is a morphism between the induced heap structures on $G$ and $H$. Every group morphism is a heap morphism, but this property is not reciprocated.  In a standard way, a heap morphism with equal domain and codomain is called a \textit{heap endomorphism}. The set of all heap endomorphisms of an abelian group $G$ is denoted by $E(G)$. Clearly, $E(G)$ is a monoid with the composition of maps as a multiplicative operation. However,  the multiplicative operation (composition) does not distribute over the pointwise defined addition. On the other hand, it does distribute over the pointwise defined ternary heap operation $[\alpha,\beta,\gamma](a)=\alpha(a)-\beta(a)+ \gamma(a)$,  that is,
$$
\delta[\alpha,\beta,\gamma]=[\delta\alpha,\delta\beta,\delta\gamma] \quad \mbox{and}\quad  [\alpha,\beta,\gamma]\delta=[\alpha\delta,\beta\delta,\gamma\delta], 
$$
for all $\alpha,\beta,\gamma, \delta\in E(G)$. An abelian heap together with a semigroup operation that distributes over the heap operation in the above sense  is called a \textit{truss}. A morphism of trusses is a mapping that is both a homomorphism of heaps and semigroups. Thus the algebraic system $(E(G), [-,-,-],\cdot)$ is a {truss}. 

If $a\in G$, then we will denote by $\widehat{a}$ the constant map 
$$
\widehat{a}: G\lra G, \qquad b\lto a.
$$
Thanks to the fact that the heap operation is idempotent, every $\widehat{a}$ is an element on $E(G)$. Clearly, the set $\widehat{G}$ of all constant maps is closed under the heap operation and the composition, hence it forms a sub-truss of $E(G)$. The map $\widehat{\_}: G\lra \widehat{G}$ is an isomorphism of heaps. The multiplication in $E(G)$ transferred back to $G$ equips the heap $(G,[-,-,-])$ with the lopsided truss structure $ab = a$, for all $a,b\in G$ (of no importance in this note, though).

The proof of the following characterization for heap morphisms is a simple exercise.

\begin{lemma}\label{heap-morphism}
Let $(G,+)$ and $(H,+)$ be abelian groups. 
\begin{itemize}
\item[(i)] A map $\varphi:G\lra H$ is a heap morphism (resp.\ isomorphism) if and only if there exists a group morphism (resp.\ isomorphism) $\widetilde\varphi:G\lra H$ and an element $h_0\in H$ such that $\varphi(\_)=\widetilde\varphi(\_) +h_0$. In this case, 
$\widetilde\varphi(\_)=\varphi(\_)-\varphi(0)$ and $h_0=\varphi(0)$ are uniquely determined by $\varphi$. 
\item[(ii)] A heap morphism $\varphi:G\lra H$ is constant  if and only if $\varphi \hat{a} =\varphi$, for all $\hat{a}\in \widehat{G}$. In this case $\varphi \alpha =\varphi$, for all $\alpha\in E(G)$.
\item[(iii)] For all $a\in G$, $\varphi\widehat{a} = \widehat{\varphi(a)}$.
\end{itemize}
\end{lemma}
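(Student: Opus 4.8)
The plan is to dispatch the three parts in the order (i), (iii), (ii), since (ii) is most cleanly deduced from (iii).

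For (i) I would argue both implications by direct computation. For the ``if'' direction, given a group morphism (resp.\ isomorphism) $\widetilde\varphi\colon G\lra H$ and an element $h_0\in H$, set $\varphi:=\widetilde\varphi(\_)+h_0$ and compute
\begin{align*}
\varphi(a-b+c) &=\widetilde\varphi(a)-\widetilde\varphi(b)+\widetilde\varphi(c)+h_0\\
&=\big(\widetilde\varphi(a)+h_0\big)-\big(\widetilde\varphi(b)+h_0\big)+\big(\widetilde\varphi(c)+h_0\big)=\varphi(a)-\varphi(b)+\varphi(c),
\end{align*}
the middle step using commutativity of $H$ to insert and cancel two copies of $h_0$; hence $\varphi$ is a heap morphism. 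Conversely, given a heap morphism $\varphi$, put $h_0:=\varphi(0)$ and $\widetilde\varphi:=\varphi(\_)-\varphi(0)$; then $\widetilde\varphi(0)=0$, and applying the heap-morphism identity to the triple $(a,0,c)$ gives $\widetilde\varphi(a+c)=\varphi(a)-\varphi(0)+\varphi(c)-\varphi(0)=\widetilde\varphi(a)+\widetilde\varphi(c)$, so $\widetilde\varphi$ is a group morphism and $\varphi=\widetilde\varphi(\_)+h_0$ by construction. The refinement to isomorphisms is immediate, since translation by $h_0$ is a bijection of the underlying sets, so $\varphi$ is bijective if and only if $\widetilde\varphi$ is, and a bijective group morphism is a group isomorphism. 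Uniqueness follows by evaluating any such decomposition at $0$, which forces $h_0=\varphi(0)$ and hence $\widetilde\varphi=\varphi(\_)-\varphi(0)$.

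Part (iii) is immediate: for every $b\in G$ one has $(\varphi\widehat a)(b)=\varphi(\widehat a(b))=\varphi(a)$, so $\varphi\widehat a$ is the constant map with value $\varphi(a)$, that is $\widehat{\varphi(a)}$. For (ii), the forward implication is trivial: if $\varphi$ is constant, say $\varphi=\widehat h$, then $(\varphi\alpha)(b)=\varphi(\alpha(b))=h=\varphi(b)$ for every $\alpha\in E(G)$ and every $b\in G$, so $\varphi\alpha=\varphi$, and in particular $\varphi\widehat a=\varphi$ for all $\widehat a\in\widehat G$. For the converse, assume $\varphi\widehat a=\varphi$ for all $\widehat a\in\widehat G$; taking $a=0$ and invoking part (iii) gives $\varphi=\varphi\widehat 0=\widehat{\varphi(0)}$, which is constant, whence the remaining assertion follows from the forward implication just established.

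There is no genuine obstacle: the statement amounts to bookkeeping around the identity $[a,b,c]=a-b+c$. The only two points meriting a moment's attention are the use of commutativity of $H$ when cancelling the copies of $h_0$ in the computation for (i), and the observation that bijectivity passes through translation, so that the ``isomorphism'' version of (i) requires no separate argument.
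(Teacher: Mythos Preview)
Your proof is correct and complete. The paper itself omits the proof entirely, stating only that it ``is a simple exercise,'' so there is nothing to compare against; your argument supplies precisely the routine verifications one would expect.
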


The first main result of this note is contained in the following theorem.

\begin{theorem}[The Baer-Kaplansky theorem for abelian groups]\label{main-BK}
Two abelian groups $(G,+)$ and $(H,+)$ are isomorphic if and only if their endomorphism trusses $E(G)$ and $E(H)$ are isomorphic. 

Furthermore, for every isomorphism  $\Phi:E(G)\lra E(H)$ of trusses there exists a unique heap isomorphism $\varphi:G\lra H$ such that $\Phi(\alpha)=\varphi \alpha \varphi^{-1}$ for all $\alpha\in E(G)$.    
\end{theorem}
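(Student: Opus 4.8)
The plan is to verify the ``if'' implication by a direct computation and to recover the heap isomorphism from an arbitrary truss isomorphism by pinning down the sub-heap $\widehat G\subseteq E(G)$ of constant maps in purely truss-theoretic language.

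For the ``if'' direction I would note that a heap isomorphism $\varphi\colon G\lra H$ induces a map $\Phi\colon E(G)\lra E(H)$, $\alpha\lto\varphi\alpha\varphi^{-1}$: it lands in $E(H)$ because a composite of heap morphisms is a heap morphism, it is bijective with inverse $\beta\lto\varphi^{-1}\beta\varphi$, it is visibly multiplicative, and it respects the pointwise heap operation since $\varphi$ does, so it is an isomorphism of trusses. Together with Lemma~\ref{heap-morphism}(i) (a heap isomorphism differs from a group isomorphism by a translation) this already yields the ``only if'' half of the first assertion: if $G\cong H$ as groups then $G\cong H$ as heaps and hence $E(G)\cong E(H)$ as trusses.

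For the converse I would fix a truss isomorphism $\Phi\colon E(G)\lra E(H)$ and make the key observation: by Lemma~\ref{heap-morphism}(ii), an element $\alpha\in E(G)$ is constant if and only if $\alpha\beta=\alpha$ for every $\beta\in E(G)$, i.e.\ $\widehat G$ is exactly the set of left zeros of the multiplicative semigroup of $E(G)$. This condition is preserved by any semigroup isomorphism, so $\Phi(\widehat G)=\widehat H$ and $\Phi$ restricts to a heap isomorphism $\widehat G\lra\widehat H$. Transporting along the heap isomorphisms $a\lto\widehat a$ and $h\lto\widehat h$, I obtain a heap isomorphism $\varphi\colon G\lra H$ characterised by $\widehat{\varphi(a)}=\Phi(\widehat a)$ for all $a\in G$.

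It then remains to identify $\Phi$ with conjugation by $\varphi$. For $\alpha\in E(G)$ and $a\in G$, Lemma~\ref{heap-morphism}(iii) gives $\alpha\widehat a=\widehat{\alpha(a)}$, so applying $\Phi$ and using $\Phi(\widehat b)=\widehat{\varphi(b)}$ yields $\Phi(\alpha)\widehat{\varphi(a)}=\widehat{\varphi(\alpha(a))}$; on the other hand Lemma~\ref{heap-morphism}(iii) in $H$ gives $\Phi(\alpha)\widehat{\varphi(a)}=\widehat{\Phi(\alpha)(\varphi(a))}$, and comparing the two constant maps gives $\Phi(\alpha)(\varphi(a))=\varphi(\alpha(a))$ for all $a$, that is $\Phi(\alpha)=\varphi\alpha\varphi^{-1}$ since $\varphi$ is surjective. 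Uniqueness is immediate: evaluating $\varphi\alpha\varphi^{-1}=\psi\alpha\psi^{-1}$ at $\alpha=\widehat a$ and using $\varphi\widehat a\varphi^{-1}=\widehat{\varphi(a)}$ forces $\varphi=\psi$. I expect the only nontrivial point to be the intrinsic description of $\widehat G$ as the left zeros of $E(G)$; once that is in place the rest is a short diagram chase, and, in contrast with the classical Baer--Kaplansky theorem, no analysis of idempotents, socles or torsion is needed — passing from $\End(G)$ to $E(G)$ already places a faithful copy of $G$ inside the endomorphism structure.
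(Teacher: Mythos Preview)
Your proof is correct and follows essentially the same approach as the paper: identify $\widehat G$ inside $E(G)$ intrinsically, transport it along $\Phi$ to obtain $\varphi$, and then verify $\Phi(\alpha)\varphi=\varphi\alpha$ via $\alpha\widehat a=\widehat{\alpha(a)}$. The only minor difference is in how $\Phi(\widehat G)\subseteq\widehat H$ is established: the paper checks directly that $\Phi(\widehat a)\,\widehat b=\Phi(\widehat a\,\Phi^{-1}(\widehat b))=\Phi(\widehat a)$ and then invokes Lemma~\ref{heap-morphism}(ii), whereas you use the cleaner characterisation of $\widehat G$ as the set of left zeros of the multiplicative semigroup of $E(G)$---an alternative the paper itself records in the remark immediately following the proof.
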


\begin{proof}
For all $a\in G$ and $b\in H$, 
$$
\Phi(\widehat{a})\widehat{b}=\Phi(\widehat{a}\Phi^{-1}(\widehat{b}))=\Phi(\widehat{a}).
$$
It follows that $\Phi(\widehat{a})\in \widehat{H}$. 
Therefore, one can define functions 
 $\varphi:G\lra H$ 
 and $\psi:H\lra G$, by setting
 $$
 \widehat{\varphi(a)} = \Phi(\widehat{a}), \qquad \widehat{\psi(b)} = \Phi^{-1}(\widehat{b}),
 $$
 for all $a\in G$ and $b\in H$. Note that, since $ \Phi(\widehat{a})$ is a constant function, $\varphi(a) = \Phi(\widehat{a})(b)$, for all $b\in H$ (in particular for $b=0$).
Since both $\Phi$ and $\Phi^{-1}$ are heap morphisms, $\varphi$ and $\psi$ are heap morphisms too. Moreover, $\varphi\psi=1_H$ since
$$
\widehat{\varphi\psi(b)}= \Phi(\widehat{\psi(b)}) = \Phi(\Phi^{-1}(\widehat{b}))=\widehat{b},
$$
for all $b\in H$. In an analogous way one proves that $\psi\varphi=1_G$, hence $
\psi = \varphi^{-1}$ and $\varphi$ is an isomorphism of heaps. In view of  Lemma~\ref{heap-morphism} the required isomorphism of abelian groups is thus obtained as $a\lto \varphi(a) - \varphi(0)$.

The assignment $\Phi\lto \varphi$ just constructed can be understood as the maping $\Theta$,
defined by 
$$
(\Phi:E(G)\lra E(H))\overset{\Theta}\lto (\Phi(\widehat{\_})(0):G\lra H)\, ,
$$ 
from the set of truss isomorphisms $E(G)\lra E(H)$ to the set of heap isomorphisms $G\lra H$. The inverse map $\Upsilon$ is defined by 
$\Upsilon(\varphi)(\alpha)=\varphi \alpha \varphi^{-1}$, for all heap isomorphisms $\varphi:G\lra H$ and all $\alpha \in E(G)$. Clearly,  $\Upsilon(\varphi)$ is an isomorphism of trusses.

To check that $\Theta$ and $\Upsilon$ are mutual inverses, take any heap isomorphism
$\varphi:G\lra H$ and $a\in G$, and compute 
$$
\widehat{\Theta\Upsilon(\varphi)(a)}=(\Upsilon(\varphi))(\widehat{a})=\varphi \widehat{a}\varphi^{-1}=\varphi\widehat{a}=\widehat{\varphi(a)},
$$
where the last two equalities follow by Lemma~\ref{heap-morphism}.
Therefore, $\Theta\Upsilon(\varphi)=\varphi$,  for all heap isomorphisms $\varphi:G\lra H$. 

In the converse direction, for all truss isomorphisms $\Phi:E(G)\lra E(H)$, and $\alpha\in E(G)$, both 
$\Theta(\Phi)\alpha$ and $\Phi(\alpha)\Theta(\Phi)$ are heap morphisms from $G$ onto $H$. Moreover, for all $a\in G$,
\begin{align*} 
\widehat{\Theta(\Phi)\alpha(a)}&=\Phi(\widehat{\alpha(a)})=\Phi(\alpha\widehat{a})
=\Phi(\alpha)\Phi(\widehat{a}) 
=\Phi(\alpha)\widehat{\Theta(\Phi)(a)}.
\end{align*}
It follows that $\Theta(\Phi)\alpha = \Phi(\alpha)\Theta(\Phi)$,  whence
$$
\Upsilon\Theta(\Phi)(\alpha)=\Theta(\Phi) \alpha\Theta(\Phi)^{-1}=\Phi(\alpha),
$$
 for all $\alpha\in E(G)$. This completes the proof.
\end{proof}

\begin{remark}
In the first part of the proof of Theorem~\ref{main-BK} it was shown that $\Phi(\widehat{G})\subseteq \widehat{H}$. Let us mention that this conclusion is valid for all surjective semigroup morphisms $(E(G),\cdot)\lra (E(H),\cdot)$. This can be also obtained by using the equality
$$
\widehat{G}=\{\alpha\in E(G)\mid \alpha\beta=\alpha \text{ for all }\beta\in E(G)\},
$$
meaning that the set of constant maps $G\lra G$ coincides with the set of left absorbing elements in the semigroup $(E(G),\cdot)$. 

Since $\widehat{G} \cong G$ as heaps and every truss isomorphism $E(G)\lra E(H)$ restricts to the truss isomorphism $\widehat{G}\lra \widehat{H}$, it induces a heap isomorphism $G\lra H$. 
\end{remark}

\begin{remark}
In view of the correspondence between abelian groups and heaps, in particular, since the transformation of a heap to any of its retracts and then back to the heap yields identity, one can reformulate Theorem~\ref{main-BK} and prove it entirely in the heap phraseology. Specifically, the mapping $\Theta$ in the proof of Theorem~\ref{main-BK}  establishes a bijective correspondence between isomorphisms of all abelian heaps and isomorphisms between their corresponding endomorphism trusses.
\end{remark}

In fact, all truss morphisms $E(G)\lra E(H)$ are inner in some sense.

\begin{proposition}\label{inner-end}
Let $(G,+)$, $(H,+)$ be abelian groups and let   $\Phi:E(G)\lra E(H)$ be a truss morphism. Set 
$$
\eps_\Phi=\Phi(\widehat{0})-\Phi(\widehat{0})(0) \quad \mbox{and}\quad  e_\Phi=\Phi(\widehat{0})(0), 
$$
and define the following subset of the heap $\Heap(G,H)$ of all heap morphisms from $G$ to $H$,
$$
\Xi_\Phi:=\{\xi\in \Heap(G,H) \mid \Phi(\alpha)\xi=\xi \alpha, \textrm{ for all } \alpha\in E(G)\}.
$$ 
\begin{itemize}
\item[(a)]  The map $\eps_\Phi$ is an idempotent endomorphism of the abelian group $(H,+)$ and $\eps_\Phi(e_\Phi)=0$.
\item[(b)]  The set $\Xi_\Phi$ is not empty.
\item[(c)] For all $\xi \in \Xi_\Phi$, 
$\xi(0)\in e_\Phi+ \Ima(\eps_\Phi)$.
\item[(d)] The set $\Xi_\Phi$  
is a sub-heap of $\Heap(G,H)$, and it is isomorphic to the sub-heap $e_\Phi+ \Ima(\eps_\Phi)$ of $H$.
\end{itemize}
\end{proposition}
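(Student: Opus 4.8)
The engine of the argument is the identity $\alpha\widehat{a}=\widehat{\alpha(a)}$ of Lemma~\ref{heap-morphism}(iii): since $\Phi$ is a semigroup morphism, applying it yields
$$
\Phi(\alpha)\,\Phi(\widehat{a})=\Phi(\widehat{\alpha(a)}),\qquad\textrm{for all }\alpha\in E(G),\ a\in G.
$$
Together with the fact that, for each $h\in H$, the evaluation $E(H)\lra H$, $\gamma\lto\gamma(h)$, is a heap morphism (the heap structure on $E(H)$ being pointwise), this lets me introduce the candidate intertwiners $\xi_h\colon G\lra H$, $a\lto\Phi(\widehat{a})(h)$, one for every $h\in H$; being composites of $\widehat{\_}\colon G\lra\widehat{G}$, the restriction $\Phi|_{\widehat{G}}$, and evaluation at $h$, each $\xi_h$ is a heap morphism. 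These $\xi_h$ will be the explicit members of $\Xi_\Phi$.

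For (a), note that $\eps_\Phi$ is, by Lemma~\ref{heap-morphism}(i), precisely the linear part $\widetilde{\Phi(\widehat{0})}$ of the heap endomorphism $\Phi(\widehat{0})$ of $H$, hence an endomorphism of $(H,+)$, with $\Phi(\widehat{0})(x)=\eps_\Phi(x)+e_\Phi$ for all $x\in H$. Because $\widehat{0}\,\widehat{0}=\widehat{0}$ in $E(G)$, the element $\Phi(\widehat{0})$ is an idempotent of the monoid $(E(H),\cdot)$; expanding $\Phi(\widehat{0})\big(\Phi(\widehat{0})(x)\big)=\Phi(\widehat{0})(x)$ with this formula gives $\eps_\Phi(e_\Phi)=0$ on evaluation at $x=0$, and then $\eps_\Phi^{2}=\eps_\Phi$ from the surviving terms. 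For (b) it then suffices to check each $\xi_h\in\Xi_\Phi$: for $\alpha\in E(G)$ and $a\in G$,
$$
(\Phi(\alpha)\xi_h)(a)=\Phi(\alpha)\big(\Phi(\widehat{a})(h)\big)=\big(\Phi(\alpha)\Phi(\widehat{a})\big)(h)=\Phi(\widehat{\alpha(a)})(h)=\xi_h(\alpha(a))=(\xi_h\alpha)(a),
$$
using the displayed identity. For (c), take $\xi\in\Xi_\Phi$ and specialise $\Phi(\alpha)\xi=\xi\alpha$ to $\alpha=\widehat{0}$: then $\Phi(\widehat{0})\,\xi=\xi\,\widehat{0}=\widehat{\xi(0)}$, so evaluating at $0$ gives $\eps_\Phi(\xi(0))+e_\Phi=\xi(0)$, whence $\xi(0)\in e_\Phi+\Ima(\eps_\Phi)$.

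For (d), $\Xi_\Phi$ is a sub-heap of $\Heap(G,H)$ because composition distributes over the pointwise heap operation on either side, so $\Phi(\alpha)[\xi,\xi',\xi'']=[\Phi(\alpha)\xi,\Phi(\alpha)\xi',\Phi(\alpha)\xi'']=[\xi\alpha,\xi'\alpha,\xi''\alpha]=[\xi,\xi',\xi'']\alpha$ whenever $\xi,\xi',\xi''\in\Xi_\Phi$. For the isomorphism I would use the heap morphism $\mathrm{ev}_0\colon\Xi_\Phi\lra H$, $\xi\lto\xi(0)$, which by (c) lands in the coset $e_\Phi+\Ima(\eps_\Phi)$ (a coset of a subgroup, hence a sub-heap of $H$). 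Specialising $\Phi(\alpha)\xi=\xi\alpha$ to $\alpha=\widehat{a}$ gives $\Phi(\widehat{a})\xi=\widehat{\xi(a)}$, hence $\xi(a)=\Phi(\widehat{a})(\xi(0))$ for all $a\in G$; thus $\xi=\xi_{\xi(0)}$ is reconstructed from $\xi(0)$ alone, so $\mathrm{ev}_0$ is injective. Conversely, by (b) each $\xi_h\in\Xi_\Phi$ with $\xi_h(0)=\Phi(\widehat{0})(h)=\eps_\Phi(h)+e_\Phi$, so the values $\xi_h(0)$ exhaust $e_\Phi+\Ima(\eps_\Phi)$ as $h$ ranges over $H$; hence $\mathrm{ev}_0$ is a bijection, and being a heap morphism it is a heap isomorphism onto that sub-heap. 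The only mildly delicate point is the rigidity relation $\xi(a)=\Phi(\widehat{a})(\xi(0))$ underpinning (d); everything else is a direct unwinding of the semigroup-morphism property and the pointwise definitions, so I foresee no real obstacle.
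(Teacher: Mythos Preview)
Your proof is correct and follows essentially the same line as the paper's: the same intertwiners $\xi_h(a)=\Phi(\widehat{a})(h)$, the same use of $\Phi(\widehat{0})^2=\Phi(\widehat{0})$ for (a), the same specialisation to $\alpha=\widehat{0}$ for (c), and the same rigidity identity $\xi=\xi_{\xi(0)}$ for (d). The only cosmetic difference is that in (d) the paper builds the isomorphism in the opposite direction, via $\vartheta\colon e_\Phi+\Ima(\eps_\Phi)\lra\Xi_\Phi$, $c\lto\xi_c$, whereas you use its inverse $\mathrm{ev}_0$; the bijectivity checks are the same pair of observations read in the other order.
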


\begin{proof}
(a) From $\Phi(\widehat{0})=\Phi(\widehat{0})\Phi(\widehat{0})$ it follows that 
$$
\eps_\Phi+e_\Phi=(\eps_\Phi+e_\Phi)(\eps_\Phi+e_\Phi)=\eps_\Phi \eps_\Phi+\eps_\Phi(e_\Phi)+e_\Phi.
$$
Using Lemma \ref{heap-morphism}, we obtain that $\eps_\Phi$ is idempotent and $\eps_\Phi(e_\Phi)$ =0. 

(b) Let $b\in H$ and  define 
$$\xi=
\xi_b:G\lra H, \qquad  a\lto \Phi(\widehat{a})(b).
$$ 
Then, for every $\alpha\in E(G)$,
$$
\Phi(\alpha)\xi_b(a)=\Phi(\alpha)\Phi(\widehat{a})(b)=\Phi(\alpha \widehat{a})(b)=\Phi(\widehat{\alpha(a)})(b)=\xi_b(\alpha(a)),
$$
hence $\xi_b\in \Xi_\Phi$. 

(c) For any $\xi\in \Xi_\Phi$,
$$
\xi(0)=\xi(\widehat{0}(0))=\Phi(\widehat{0})\xi(0)\in \Ima(\eps_\Phi)+e_\Phi,
$$
where the second equality follows by the definition of $\Xi_\Phi$.

(d) The first statement is obvious.  Since $\Ima(\eps_\Phi)+e_\Phi$ is a coset it is a sub-heap of $H$ by \cite[Theorem~1]{Ce}. Using the same notation as in the proof of (b), we consider the map 
$$
\vartheta: \Ima(\eps_\Phi)+e_\Phi\lra \Xi_\Phi,\qquad c\lto \xi_c.
$$ 
It is easy to see that this is a morphism of heaps, so it remains only to prove that it is bijective.

Take any $\xi \in \Xi_\Phi$. Then, for all $a\in G$, 
$$
\xi(a)=\xi(\widehat{a}(0))=\Phi(\widehat{a})\xi(0)=\xi_{\xi(0)}(a).$$
Hence  $\xi=\xi_{\xi(0)}$, and the map $\vartheta$ is surjective.

Moreover, we observe that for every $c\in \Ima(\eps_\Phi)+e_\Phi$,   
$$
\xi_c(0)=\Phi(\widehat{0})(c)=\eps_\Phi(c)+e_\Phi=c+e_\Phi,
$$ 
since $\eps_\Phi$ is an idempotent group endomorphism and $\eps_\Phi(e_\Phi)=0$. 
Therefore, if $c,d\in \Ima(\eps_\Phi)+e_\Phi$ and $c\neq d$ then $\xi_c\neq \xi_d$, and the proof is completed.
\end{proof}

\begin{corollary}
Let $(G,+)$ and $(H,+)$ be abelian groups, and let $\Phi:E(G)\lra E(H)$ be a truss morphism. Suppose that there exists $a\in G$ such that $\Phi(\widehat{a})$ is a constant morphism. Then there exists a unique morphism of heaps $\xi:G\lra H$ such that 
$$ 
\Phi(\alpha)\xi=\xi\alpha, \quad \textrm{ for all }\alpha\in E(G).
$$
\end{corollary}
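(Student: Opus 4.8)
The plan is to derive both existence and uniqueness of $\xi$ directly from Proposition~\ref{inner-end}, using the hypothesis on $\Phi(\widehat a)$ only to force the set $\Xi_\Phi$ occurring there to be a singleton. Existence is immediate: Proposition~\ref{inner-end}(b) guarantees $\Xi_\Phi\neq\emptyset$, and every element $\xi\in\Xi_\Phi$ is by definition a heap morphism $\xi:G\lra H$ satisfying $\Phi(\alpha)\xi=\xi\alpha$ for all $\alpha\in E(G)$. So the whole task is to prove that $\Xi_\Phi$ has exactly one element.

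By Proposition~\ref{inner-end}(d), $\Xi_\Phi$ is isomorphic, as a heap, to the coset $e_\Phi+\Ima(\eps_\Phi)$ of $H$; hence $\Xi_\Phi$ is a singleton precisely when $\Ima(\eps_\Phi)=\{0\}$, i.e.\ when $\eps_\Phi=0$. Since $\eps_\Phi=\Phi(\widehat 0)-\Phi(\widehat 0)(0)$ (its ``linear part'' by Lemma~\ref{heap-morphism}(i)), this is equivalent to $\Phi(\widehat 0)$ being a constant endomorphism of $H$. Thus the only real step is to upgrade the hypothesis ``$\Phi(\widehat a)$ is constant for some $a$'' to ``$\Phi(\widehat 0)$ is constant''.

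For this I would exploit that the constant map $\widehat 0\in E(G)$ is left absorbing: by Lemma~\ref{heap-morphism}(ii), $\widehat 0\,\beta=\widehat 0$ for every $\beta\in E(G)$, in particular $\widehat 0\,\widehat a=\widehat 0$. Applying the truss morphism $\Phi$ gives $\Phi(\widehat 0)=\Phi(\widehat 0)\Phi(\widehat a)$. By assumption $\Phi(\widehat a)$ is the constant morphism $\widehat h$ with $h:=\Phi(\widehat a)(0)\in H$, so Lemma~\ref{heap-morphism}(iii), applied to the heap endomorphism $\Phi(\widehat 0)$ of $H$, yields $\Phi(\widehat 0)=\Phi(\widehat 0)\,\widehat h=\widehat{\Phi(\widehat 0)(h)}$, which is constant. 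Therefore $\eps_\Phi=0$, so $e_\Phi+\Ima(\eps_\Phi)=\{e_\Phi\}$, and $\Xi_\Phi$ consists of a single heap morphism; that morphism is the required $\xi$.

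I do not anticipate a genuine obstacle here: given Proposition~\ref{inner-end}, the argument is only a couple of lines. The single point that needs care is that the hypothesis concerns $\Phi(\widehat a)$ for an \emph{arbitrary} $a\in G$ rather than for $a=0$, so one genuinely has to use the left-absorbing identity $\widehat 0\,\widehat a=\widehat 0$ in $E(G)$ to propagate constancy back to $\Phi(\widehat 0)$ instead of arguing directly.
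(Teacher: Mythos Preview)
Your proof is correct and follows essentially the same route as the paper: both arguments use $\widehat{0}\,\widehat{a}=\widehat{0}$ and multiplicativity of $\Phi$ to deduce that $\Phi(\widehat{0})=\Phi(\widehat{0})\Phi(\widehat{a})$ is constant, hence $\eps_\Phi=0$, and then invoke Proposition~\ref{inner-end} to conclude that $\Xi_\Phi$ is a singleton. Your write-up is simply a bit more explicit about which parts of Proposition~\ref{inner-end} are used and why constancy of $\Phi(\widehat{0})$ forces $\eps_\Phi=0$.
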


\begin{proof}
Let us observe that $\Phi(\widehat{0})=\Phi(\widehat{0})\Phi(\widehat{a})$ is a constant morphism. It follows that $\eps_\Phi=0$. As a consequence of  Proposition~\ref{inner-end}, there is a unique heap morphism $\xi$ such that $\Phi(\alpha)\xi=\xi\alpha$  for all $\alpha\in E(G)$.  
\end{proof}

\section{Isomorphisms of modules over endomorphism rings}\label{sec.mod}

Next, we would like to extend the Baer-Kaplansky correspondence to modules over a ring $R$. Na\"ively one could try to establish  the correspondence between $R$-module isomorphisms and endomorphism trusses of $R$-modules. One could try to follow the strategy of Theorem~\ref{main-BK}, that is, view the additive structure of an $R$-module $M$ as a heap and then study the $R$-linear heap endomorphisms of $M$, that is, heap endomorphisms that commute with the $R$-action. There are at least two difficulties with implementing this strategy. The first and rather technical problem is that the constant heap endomorphism is not $R$-linear in general, hence the arguments of the proof of Theorem~\ref{main-BK} may only be carried over in this limited way. The second and more fundamental point is that, as observed in \cite[Lemma~4.5]{BrzRyb:mod}, an $R$-linear heap endomorphism of  $M$ necessarily maps the zero of $M$ into itself. As a consequence the set of all $R$-linear endomorphisms of the heap  $M$ coincides with the set of endomorphisms of $M$ over the ring $R$. The zero map $\hat{0}$ is the absorber of the latter understood as a truss with respect of composition, that is, $\alpha\,\hat{0} = \hat{0}\, \alpha =\hat{0}$ (note that this is not the case in $E(M)$ but only in the $R$-linear part of $E(M)$). Since homomorphisms of trusses preserve absorbers and the retract of a truss at an absorber is a ring, isomorphisms of trusses of endomorphisms of modules over a ring coincide with isomorphisms of corresponding endomorphism rings. And the class of these is too restrictive to capture all isomorphisms between modules.

Finally, let us stress that preceding remarks do not contradict the validity of  Theorem~\ref{main-BK}. Although any abelian group $G$ is a $\Z$-module, by looking at $E(G)$ and treating $G$ as a heap we depart from viewing it not only as a module over the ring $\Z$ but also as a module over the associated truss. This provides one with the required flexibility of the structure to capture all isomorphisms of abelian groups.

In turns out that to implement the Baer-Kaplansky theorem for modules over ring a new concept of the truss of $R$-linear heap endomorphisms is needed. 

Recall from \cite{Brz20} that, given a truss $T$, an abelian heap $M$ together with the associative action $\cdot$ of the multiplicative semigroup of $T$ that distributes over the ternary operations, that is, for all $t, t_1,t_2,t_3\in T$ and $m, m_1,m_2,m_3\in M$,
$$
[t_1,t_2,t_3]\cdot m = [t_1\cdot m,t_2\cdot m,t_3\cdot m], \quad t\cdot[m_1,m_2,m_3] =[t\cdot m_1,t\cdot m_2,t\cdot m_3],
$$
is called a {\em left $T$-module}.  Any element $e\in M$ induces a new $T$-module structure on $M$ with the action
$$
t\overset{e}\cdot m = [t\cdot m, t\cdot e, e], \qquad \mbox{for all $t\in T$ and $m\in M$}.
$$

Every ring $R$ can be viewed as a truss with the same multiplication as that in $R$ and with the (abelian group) heap structure $[r,s,t] = r-s+t$. A left module $(M,+)$ over a ring  $R$ is a left module over the truss $R$: $M$ is viewed as a heap in a natural way and the $R$-action $\cdot$ is unchanged (we will denote it by juxtaposition). Note that in this case the scalar multiplication $\overset{e}\cdot$ is exactly the multiplication induced from the $R$-action on $M$ by imposing the condition that the isomorphism of abelian groups $(M,+)\lra (M,+_e)$, $x\lto [x,0,e]$, is an isomorphism of $R$-modules. In order to see this,  write every element $m\in M$ as $m=[[m,e,0],0,e]$. The above condition is then equivalent to 
$$
r\overset{e}\cdot m=[r[m,e,0],0,e]=[rm,r e,e]\textrm{ for all }m\in M.
$$
Since $0\cdot m =0$, the induced action $\overset{0}\cdot$ is equal to the original action $\cdot$.

Given a left module $M$ over a ring $R$, the family $\CH(M)$ of all $R$-modules $(M,+_e)$ whose scalar multiplications are $\overset{e}\cdot$ is  called \textit{the heap of the module $M$.} If $M,N$ are $R$-modules, a \textit{morphism of heaps of modules},  $\varphi :\CH(M)\lra \CH(N)$,
is a map $\varphi :M\lra N$ that for every $e\in M$ is a homomorphism of $R$-modules $\varphi :(M,+_e)\lra (N,+_{\varphi(e)})$.  
We denote by $H_R(M,N)$ the set of all morphisms of heaps of modules $\CH(M)\lra \CH(N)$.

The proof of the following lemma is straightforward.

\begin{lemma}\label{heap-mod-morphisms}
For a ring $R$  and left $R$-modules $M$ and $N$, 
$$
\begin{aligned}
H_R(M,N) := \big\{\varphi\in \mathrm{Heap}(M,N)\;|\;  \varphi(rm) &= r\varphi(m) -r\varphi(0) +\varphi(0)\\
&=[r\varphi(m),r\varphi(0),\varphi(0)], \quad \forall r\in R, m\in M \big\}.
\end{aligned}
$$
Moreover, 
\begin{itemize}
\item[(i)]
A heap homomorphism (resp.\ isomorphism) $\varphi: M \lra N$ is an element of $H_R(M,N)$ if and only if the map
$$
\widetilde{\varphi}: M\lra M, \qquad m\lto \varphi(m)-\varphi(0),
$$
is a homomorphism (resp.\ isomorphism) of $R$-modules.

\item[(ii)] $\mathrm{Hom}_R(M,N) \subseteq  H_R(M,N)$.

\item[(iii)] The set of endomorphisms of the heap of $M$, $E_R(M): = H_R(M,M)$, is a sub-truss of $E(M)$.
\end{itemize}
\end{lemma}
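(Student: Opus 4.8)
The plan is to unwind the definition of a morphism of heaps of modules and translate it into the pointwise linearity condition displayed in the statement, then read off parts (i)--(iii) as corollaries. First I would fix $\varphi\in\mathrm{Heap}(M,N)$ and examine what it means for $\varphi$ to be a homomorphism $(M,+_e)\lra (N,+_{\varphi(e)})$ of $R$-modules for \emph{every} $e\in M$. Additivity for every $e$ is automatic from $\varphi$ being a heap morphism (this is the remark already made in the text: $\alpha$ is a heap morphism iff it is a group morphism $(M,+_b)\lra(N,+_{\alpha(b)})$ for every $b$). So the only content is $R$-linearity: $\varphi(r\overset{e}{\cdot}m)=r\overset{\varphi(e)}{\cdot}\varphi(m)$ for all $r\in R$, $e\in M$, $m\in M$. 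Using the explicit formula $r\overset{e}{\cdot}m=[rm,re,e]$ derived in the preceding paragraph of the paper, the left-hand side is $\varphi([rm,re,e])=[\varphi(rm),\varphi(re),\varphi(e)]$ and the right-hand side is $[r\varphi(m),r\varphi(e),\varphi(e)]$. Specialising $e=0$ gives exactly $\varphi(rm)=[r\varphi(m),r\varphi(0),\varphi(0)]=r\varphi(m)-r\varphi(0)+\varphi(0)$, which is the asserted description of $H_R(M,N)$; conversely, once this holds for $e=0$ one checks it propagates to all $e$ by a short heap computation (apply $\varphi$, expand both sides, and cancel using the Mal'cev identities), so no information is lost by taking $e=0$.

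For part (i), given $\varphi\in\mathrm{Heap}(M,N)$, Lemma~\ref{heap-morphism}(i) already tells us $\widetilde\varphi(\_)=\varphi(\_)-\varphi(0)$ is a group homomorphism $(M,+)\lra(N,+)$ (an isomorphism when $\varphi$ is). So it remains to see that the $R$-linearity condition on $\varphi$ is equivalent to $R$-linearity of $\widetilde\varphi$. Substituting $\varphi=\widetilde\varphi+\varphi(0)$ into $\varphi(rm)=r\varphi(m)-r\varphi(0)+\varphi(0)$ and simplifying, the term $\varphi(0)$ cancels and $r\varphi(0)-r\varphi(0)$ cancels, leaving precisely $\widetilde\varphi(rm)=r\widetilde\varphi(m)$; the converse substitution is the same computation run backwards.

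Parts (ii) and (iii) are then immediate. For (ii): if $\varphi\in\mathrm{Hom}_R(M,N)$ then $\varphi(0)=0$ and $\varphi(rm)=r\varphi(m)=r\varphi(m)-r\cdot 0+0$, so the defining condition of $H_R(M,N)$ is satisfied. For (iii): $E_R(M)=H_R(M,M)$ is a subset of $E(M)$, and one checks it is closed under the pointwise heap operation $[\alpha,\beta,\gamma]$ and under composition --- both are routine, using that the condition in (i) passes to sums/composites via linearity of the associated $\widetilde{(\cdot)}$ maps --- and that it contains the identity; hence it is a sub-truss. I do not expect any genuine obstacle here: the whole lemma is bookkeeping around the formula $r\overset{e}{\cdot}m=[rm,re,e]$ together with Lemma~\ref{heap-morphism}. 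The only place demanding a moment's care is the claim that the $e=0$ condition already implies the condition for all $e$, i.e.\ that $H_R(M,N)$ defined via ``all $e$'' coincides with the set cut out by the single $e=0$ equation; this is where the Mal'cev identities must be used honestly rather than waved at.
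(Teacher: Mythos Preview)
Your plan is correct and is precisely the direct unwinding of definitions that the paper has in mind; the paper itself omits the argument entirely, declaring it ``straightforward''. The only point you flagged as needing care---that the $e=0$ condition implies the condition for all $e$---does go through exactly as you indicate: subtracting the $e=0$ identity applied at $m$ and at $e$ yields $\varphi(rm)-\varphi(re)=r\varphi(m)-r\varphi(e)$, which is the general statement.
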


\begin{remark}
Another way of interpreting the set $\mathrm{H}_R(M,N)$ is provided by observing that the correspondence described in the statements of Lemma~\ref{heap-mod-morphisms} can be lifted to the bijection 
$$
{H}_R(M,N) \overset{\cong}\lra N\times \mathrm{Hom}_R(M,N) , \qquad \varphi \lto \big(\varphi(0), \widetilde\varphi\big).
$$ 
The inverse is given by
$$
N\times \mathrm{Hom}_R(M,N) \lra  {H}_R(M,N), \qquad \big(n,\widetilde\varphi\big)\lto \big[\varphi: m\lto n+\widetilde\varphi(m)\big].
$$
The heap structure induced by this isomorphism is that of the product of heaps, that is,
$$
\big[(n_1,\widetilde\varphi_1), (n_2,\widetilde\varphi_2), (n_3, \widetilde\varphi_3)\big] = \big(n_1-n_2+n_3, \widetilde\varphi_1-\widetilde\varphi_2+\widetilde\varphi_3\big),
$$
for all $n_1,n_2,n_3\in N$ and $\widetilde\varphi_1, \widetilde\varphi_2,  \widetilde\varphi_3\in \mathrm{Hom}_R(M,N)$.
\end{remark}

In the following result we will prove that $E_R(M)$ determines a left  $R$-module $M$ as a module over its endomorphism ring $\End_R(M)$ with the action given by evaluation. If $R$ and $S$ are rings, $M$ is a left $R$-module, and $N$ is a left $S$ module, then we say that the modules $M$ and $N$ are \textit{equivalent} if and only if there exists a group isomorphism $\mu:M\lra N$ and a ring isomorphism $\varrho:R\lra S$ such that $\mu(rm)=\varrho(r)\mu(m)$ for all $m\in M$ and $r\in R$. 

\begin{theorem}[The Baer-Kaplansky theorem for modules]
\label{main-BK-mod} 
Let  $R$ and $S$ be rings, and let $M$ be a left $R$-module and $N$ be a left $S$-module. The trusses $E_R(M)$ and $E_S(N)$ are isomorphic if and only if
$M$ and $N$ are equivalent as modules over their endomorphism rings.      
\end{theorem}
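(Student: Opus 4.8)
The plan is to mimic the proof of Theorem~\ref{main-BK}, this time keeping careful track of the subring of $R$-linear endomorphisms sitting inside $E_R(M)$. Throughout I use Lemma~\ref{heap-mod-morphisms}: an element $\alpha\in E_R(M)$ amounts to the datum of the value $\alpha(0)\in M$ together with the $R$-linear endomorphism $\widetilde\alpha\in\End_R(M)$, via $\alpha(x)=\alpha(0)+\widetilde\alpha(x)$, and $\End_R(M)\subseteq E_R(M)$ consists precisely of those $\alpha$ with $\widetilde\alpha=\alpha$.

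\emph{The easy implication.} If $M$ and $N$ are equivalent over their endomorphism rings, then the compatibility condition $\mu(\beta m)=\varrho(\beta)\mu(m)$ forces $\varrho(\beta)=\mu\beta\mu^{-1}$ on all of $N$, so an equivalence is the same datum as a group isomorphism $\mu:M\lra N$ for which conjugation by $\mu$ carries $\End_R(M)$ bijectively onto $\End_S(N)$. Given this, I would define $\Phi:E_R(M)\lra E_S(N)$ by $\Phi(\alpha)=\mu\alpha\mu^{-1}$. Since $\mu\alpha\mu^{-1}(x)=\mu(\alpha(0))+\mu\widetilde\alpha\mu^{-1}(x)$ with $\mu\widetilde\alpha\mu^{-1}\in\End_S(N)$, Lemma~\ref{heap-mod-morphisms}(i) gives $\Phi(\alpha)\in E_S(N)$; conjugation by the group isomorphism $\mu$ automatically respects composition and the pointwise heap operation and has inverse conjugation by $\mu^{-1}$, so $\Phi$ is an isomorphism of trusses.

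\emph{The main implication.} Let $\Phi:E_R(M)\lra E_S(N)$ be a truss isomorphism. First I would note that every constant map $\widehat m$ ($m\in M$) lies in $E_R(M)$, since $\widehat m(rx)=m=[rm,rm,m]$, and that — exactly as in the Remark following Theorem~\ref{main-BK} — $\widehat M$ is the set of left absorbing elements of the semigroup $(E_R(M),\cdot)$; hence $\Phi(\widehat M)=\widehat N$, and one obtains a heap isomorphism $\mu_0:M\lra N$ by $\widehat{\mu_0(m)}=\Phi(\widehat m)$. The computation $\widehat{\Phi(\alpha)(\mu_0(m))}=\Phi(\alpha)\,\widehat{\mu_0(m)}=\Phi(\alpha)\Phi(\widehat m)=\Phi(\alpha\widehat m)=\Phi(\widehat{\alpha(m)})=\widehat{\mu_0(\alpha(m))}$ then yields $\Phi(\alpha)=\mu_0\alpha\mu_0^{-1}$ for all $\alpha\in E_R(M)$, and Lemma~\ref{heap-morphism} supplies the underlying group isomorphism $\mu:=\mu_0-\mu_0(0):M\lra N$. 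It remains to extract the ring isomorphism: for $\beta\in\End_R(M)$ the element $\Phi(\beta)=\mu_0\beta\mu_0^{-1}\in E_S(N)$ has $S$-linear part $\widetilde{\Phi(\beta)}=\mu\beta\mu^{-1}$, so $\mu\beta\mu^{-1}\in\End_S(N)$ by Lemma~\ref{heap-mod-morphisms}(i); applying the same reasoning to $\Phi^{-1}$ (whose associated heap isomorphism is $\mu_0^{-1}$, with linear part $\mu^{-1}$) shows conjugation by $\mu^{-1}$ sends $\End_S(N)$ into $\End_R(M)$. Hence $\varrho:\beta\lto\mu\beta\mu^{-1}$ is a bijection $\End_R(M)\lra\End_S(N)$; it is evidently additive, multiplicative and unital, hence a ring isomorphism, and $\mu(\beta m)=\mu(\beta(m))=\varrho(\beta)(\mu(m))=\varrho(\beta)\mu(m)$, so $M$ and $N$ are equivalent as modules over their endomorphism rings acting by evaluation.

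The step I expect to be the real content is the linear-part identity $\widetilde{\Phi(\beta)}=\mu\beta\mu^{-1}$ for $\beta\in\End_R(M)$: although $\mu_0$ need not fix the zero of $M$, its affine correction $\mu_0(0)$ must wash out when one passes to $S$-linear parts, and it is precisely this cancellation that makes the endomorphism \emph{rings} correspond and not merely the underlying heaps $M$ and $N$. Everything else is a direct transcription of the proof of Theorem~\ref{main-BK}, using Lemmas~\ref{heap-morphism} and~\ref{heap-mod-morphisms}.
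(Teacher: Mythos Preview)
Your proof is correct and follows the same overall strategy as the paper's: both directions rest on the constant maps $\widehat{m}$, the decomposition $\alpha\leftrightarrow(\alpha(0),\widetilde\alpha)$ from Lemma~\ref{heap-mod-morphisms}, and the conjugation formula. Your organisation is somewhat tighter than the paper's. In the forward direction the paper writes $\Phi(\alpha)=\varrho(\widetilde\alpha)+\mu(\alpha(0))$ and checks multiplicativity by hand, whereas you observe that this is simply $\mu\alpha\mu^{-1}$ and get the truss axioms for free. In the converse direction the paper never states the global identity $\Phi(\alpha)=\mu_0\alpha\mu_0^{-1}$ inside the proof (it appears only in the subsequent remark) and instead verifies $\varrho(u)\mu=\mu u$ by a direct eight-line heap computation, then proves surjectivity of $\varrho$ by another explicit calculation with $\widetilde\alpha$; you instead derive $\Phi(\alpha)=\mu_0\alpha\mu_0^{-1}$ first, read off $\widetilde{\Phi(\beta)}=\mu\beta\mu^{-1}$ from the multiplicativity of $\widetilde{(-)}$, and obtain surjectivity by symmetry via $\Phi^{-1}$. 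The ``real content'' you flag is just the identity $\widetilde{\gamma\delta}=\widetilde\gamma\,\widetilde\delta$ for composable heap morphisms (which the paper also uses in the forward direction), so nothing is missing.
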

\begin{proof} If $M$ and $N$ are equivalent as modules over their endomorphism rings, there exist an additive map $\mu:M\lra N$ and a ring isomorphism $\varrho:\End_R(M)\lra \End_S(N)$ such that $\mu(um)=\varrho(u)\mu(m)$, for all $m\in M$ and $u\in \End_R(M)$, or, equivalently, $\mu u=\varrho(u)\mu$. We claim that
$$
\Phi:E_R(M)\lra E_S(N), \qquad \alpha\lto \varrho(\alpha-\alpha(0))+\mu(\alpha(0)) = \varrho(\widetilde{\alpha})+\mu(\alpha(0)),
$$
 is an isomorphism of trusses. Indeed, first noting that $\Phi(\alpha)(0) = \mu(\alpha(0))$, and using that $\varrho$ maps endomorphisms of $M$ into endomorphisms  of $N$ one easily finds that $\Phi(\alpha)\in E_S(N)$. To prove that $\Phi$ respects multiplications, it is enough to observe that, for all $\alpha,\beta \in E_R(M)$,
 $
 \widetilde{\alpha}\widetilde{\beta} = \widetilde{\alpha\beta},
 $
 since, for all $m\in M$,
 $$ 
 \begin{aligned}
\widetilde{\alpha}\widetilde{\beta}(m) &= \alpha(\beta(m) -\beta(0)) - \alpha(0)\\
& = \alpha(\beta(m)) - \alpha(\beta(0)) +  \alpha(0) - \alpha(0) = \widetilde{\alpha\beta}(m),
\end{aligned}
$$
where we have used that $\alpha$ is a heap endomorphism. Therefore, using the definition of the equivalence of modules we find,
$$
\begin{aligned}
\Phi (\alpha \beta) &= \varrho(\widetilde{\alpha\beta}) +\mu (\alpha\beta(0)) = \varrho(\widetilde{\alpha})\varrho(\widetilde{\beta}) + \mu(\widetilde{\alpha}\beta(0) + \alpha(0))\\
&= \varrho(\widetilde{\alpha})\varrho(\widetilde{\beta}) + \varrho(\widetilde{\alpha})\mu(\beta(0)) + \mu(\alpha(0))\\
&= \Phi(\alpha) \left(\varrho(\widetilde{\beta}) + \mu(\beta(0))\right) = \Phi (\alpha)\Phi(\beta) ,
\end{aligned}
$$
as required.

Conversely, let $\Phi: E_R(M)\lra E_S(N)$ be an isomorphism of trusses.
As before, if $m\in M$, we denote by $\widehat{m}$ the constant map $x\lto m$ for all $x\in M$.  
Once it is noted that, for all $m\in M$, $\widehat{m}\in E_R(M)$ the arguments of the first part of the proof of Theorem~\ref{main-BK} may be repeated verbatim to associate a heap isomorphism $\varphi: M\lra N$ corresponding to the truss isomorphism $\Phi: E_R(M)\lra E_S(N)$ by the formula 
$$
\widehat{\varphi(m)} = \Phi(\widehat{m}), \qquad \mbox{for all $m\in M$}.
$$
Define  the isomorphism of abelian groups $\mu:M\lra N$ by $\mu=\widetilde{\varphi}$, 
 and  set
$$
\varrho:\End_R(M)\lra E_S(N),\qquad u\lto \Phi(u)-\Phi(u)(0)=[\Phi(u),\widehat{\Phi(u)(0)},\widehat{0}].
$$
Since $\varrho(u)(0)=0$, it follows that $\varrho(u)\in\End_S(N)$ for all $u\in \End_R(M)$.
Furthermore, since
$$\Phi(u+v)=\Phi([u,\widehat{0},v])=[\Phi(u),\Phi(\widehat{0}),\Phi(v)]=\Phi(u)+\Phi(v)-\Phi(\widehat{0}),
$$
it can be proven  that $\varrho$ is additive by direct calculations.

For all $u\in \End_R(M)$ and $m\in M$ we can compute,
\begin{align*}
\varrho(u)\mu(m)&=[\Phi(u)([\varphi(m),\varphi({0}),0]), \Phi(u)(0),0]\\
&= [[\Phi(u)(\varphi(m)),\Phi(u)(\varphi({0})),\Phi(u)(0)], \Phi(u)(0),0] \\
&= [\Phi(u)(\varphi(m)),\Phi(u)(\varphi({0})),0] \\
&= [\Phi(u)(\Phi(\widehat{m})(0)),\Phi(u)(\Phi(\widehat{0})(0)),0] \\
&= [\Phi(u\widehat{m})(0),\Phi(u\widehat{0})(0),0] 
= [\Phi(\widehat{u(m)})(0),\Phi(\widehat{0})(0),0]\\ 
&= [\widehat{\varphi(u(m))}(0),\widehat{\varphi(0)}(0),0]=[\varphi(u(m)),\varphi(0),0] = \mu u(m).
\end{align*}
Hence $\varrho(u) \mu=\mu u$.  In particular, $\varrho(u)=\mu u\mu^{-1}$, which immediately implies that $\varrho$ is an injective multiplicative map. 

Let $v\in \End_S(N)$ and $\alpha\in E_R(M)$ such that $\Phi(\alpha)=v$. Then 
\begin{align*}
\varrho(\widetilde\alpha)&= \Phi(\widetilde\alpha)-\Phi(\widetilde\alpha)(0)=\Phi([\alpha,\widehat{\alpha(0)},\widehat{0}])-\Phi([\alpha,\widehat{\alpha(0)},\widehat{0}])(0)\\
&= [\Phi(\alpha),\Phi(\widehat{\alpha(0)}),\Phi(\widehat{0})]-[\Phi(\alpha),\Phi(\widehat{\alpha(0)}),\Phi(\widehat{0})](0)\\
&=\Phi(\alpha)-\Phi(\alpha)(0)=v-v(0)=v,
\end{align*}
where we have used that $\Phi$ is a heap homomorphism and that it maps constant homomorphisms  to constant ones. Hence $\varrho$ is surjective.
Therefore,  $\varrho$ is a ring isomorphism as required.
\end{proof}

In general, the existence of an isomorphism of trusses $E_R(M)\cong E_R(N)$ does not guarantee that the initial $R$-modules are isomorphic.

\begin{example}\label{non-iso}
Let $F$ be a field and $R=F\times F$. If $M=F\times 0$ and $N=0\times F$, it is easy to check that $E_R(M)\cong E_R(N)$, while there is no $R$-module isomorphism connecting $M$ with $N$. 
\end{example} 

\begin{remark}
As in the case of abelian groups, every truss isomorphism $\Phi:E_R(M)\lra E_S(N)$ is induced uniquely by 
a  heap isomorphism $\varphi\in H(M,N)$ defined in the proof of Theorem \ref{main-BK-mod}, via  $\Phi(\alpha)=\varphi \alpha \varphi ^{-1}$, for all $\alpha\in E_R(M)$. However, not all heap isomorphisms $\varphi :M\lra N$ induce  an isomorphism of trusses $E_R(M)\lra E_S(N)$ in this way since in general $\widetilde{\Phi(\alpha )} = \varphi \alpha \varphi ^{-1}-\varphi \alpha \varphi ^{-1}(0)$ need not be  $S$-linear.  

For instance, let $F=\mathbb{R}$ in Example \ref{non-iso}. Since $\sqrt{2}$, $\sqrt{3}$, and $\sqrt{6}$ are linearly independent over $\mathbb{Q}$, there exists an additive isomorphism $\varphi:\mathbb{R}\lra \mathbb{R}$ such that $\varphi(\sqrt{2})=\sqrt{3}$, $\varphi(\sqrt{3})=\sqrt{6}$, and $\varphi(\sqrt{6})=\sqrt{2}$. If $\alpha:\mathbb{R}\lra \mathbb{R}$, $\alpha(x)=\sqrt{2}\cdot x$, then $\varphi \alpha \varphi^{-1} (\sqrt{2})=\varphi(\sqrt{2}\sqrt{6})=\varphi(2\sqrt{3})=2\sqrt{6}$, and    $\varphi \alpha \varphi^{-1}(\sqrt{6})=\varphi(\sqrt{2}\cdot\sqrt{3})=\sqrt{2}$. Then $\varphi \alpha \varphi^{-1}(\sqrt{3}\cdot\sqrt{2})\neq \sqrt{3}\,\varphi \alpha\varphi^{-1}(\sqrt{2})$, hence $\varphi \alpha \varphi^{-1}$ is not $\mathbb{R}$-linear.
\end{remark}

\section{Conclusions}
In this note we have formulated and proven the Baer-Kaplansky theorem for all abelian groups and all modules over not necessarily commutative rings. It turns out that such a formulation is possible provided we abandon the classical group and ring theory point of view and adopt a more general albeit less standard perspective of  heaps and trusses. This departure from a classical universe of groups and rings is not made out of choice but out of necessity as it  seems impossible to confine the information about isomorphism classes of abelian groups to a structure consisting of a set with two binary operations, one distributing over the other. Trusses that feature in the Baer-Kaplansky theorem may be interpreted as arising from a modification of the notion of a group homomorphism; rather than dealing with group homomorphisms one needs to deal with homomorphisms of associated heaps instead. In a similar way, the Baer-Kaplansky theorem for modules relies on a modification of the notion of a homomorphism of modules.  The latter points to an exciting possibility of developing a new approach to module theory in which a single module over a ring is replaced by a heap of modules (as defined in Section~\ref{sec.mod}). While propelled by the philosophy of exploring fully  ternary  formulation of group axioms, this still goes beyond the study of modules over trusses already undertaken in \cite{Brz20}, \cite{BrzRyb:mod} or \cite{BrzRyb:fun} and constitutes the subject of on-going investigations \cite{BreBrz:hea}.

\end{document}